\newtheorem{thm}{Theorem}
\newtheorem{prop}{Proposition}
\newtheorem*{CCW}{Carbery-Christ-Wright Uniform Sublevel Set Theorem \cite{carbery1999multidimensional}}
\newtheorem*{CCW2}{Proposition (Carbery-Christ-Wright) \cite{carbery1999multidimensional}}
\newtheorem*{stei}{Theorem (Steinerberger) \cite{steinerberger2019sublevel}}
\title{On a new type of Inequality related to the Uniform Sublevel Set Problem}
\author{John Green}
\date{}
\begin{document}
\maketitle
\begin{abstract}
Recently, Steinerberger \cite{steinerberger2019sublevel} proved a uniform inequality for the Laplacian serving as a counterpoint to the standard uniform sublevel set inequality which is known to fail for the Laplacian. In this note, we give an elementary proof of this result which highlights a step allowing for adaptations to other situations, for instance, we show that the inequality also holds for the heat operator. We formulate some naturally arising questions.
\end{abstract}
\begin{footnotesize}
\textbf{Key words.} Oscillatory integrals, sublevel set estimates, uniform inequality, Laplacian, heat operator
\end{footnotesize}
\section{Introduction}
A central problem throughout analysis is to understand how oscillatory integrals
$$I(\lambda)=\int e^{i\lambda u(x)}\,dx$$
decay for large values of a real ``frequency parameter" $\lambda$, where $u$ is a real-valued ``phase" function. In general, considerations such as the domain of integration or other functions multiplying the oscillatory factor inside the integral are important, but for the sake of this discussion we will not go into these specifics. Typically this decay will be expressed as
$$|I(\lambda)|\leq C\lambda^{-\delta}$$
for some $\delta>0$. Here we have in mind the idea that as $\lambda$ increases, the small differences in $u(x)$ from moving in $x$ become large differences in $\lambda u(x)$, which in turn corresponds to rapid oscillation in $e^{i\lambda u(x)}$. Thus in the integral, we expect $I(\lambda)$ to decay for large $\lambda$ provided $u$ does not stay near any particular value, and the more quickly $u$ ``moves around", the greater the cancellation we expect to occur, and hence the greater we can take $\delta$ to be. Thus one of the most natural conditions to impose is that $Du$ be bounded below by some positive constant, for some differential operator $D$.

Crucial to many applications and key to the discussion in this paper is the idea of uniformity of the constant $C$ within a large class of phases. A natural example appears when studying the Fourier transform of some density on a $k$-dimensional surface $S$ in $\mathbb{R}^n$. After performing a change of variables, we will have integrals containing an oscillatory factor $e^{-i\phi(x)\cdot\xi}$, where $\phi$ is a function on a piece of $\mathbb{R}^k$ parametrising a piece of $S$, and $\xi$ is the Fourier variable. Writing $\xi=|\xi|\omega$ for $\omega\in S^{n-1}$, we consider $|\xi|$ to be our frequency parameter and $-\phi(x)\cdot\omega$ is a class of phase functions indexed by $\omega$. If we are to obtain estimates on the Fourier transform of the form $C|\xi|^{-\delta}$, we need to make sure the constant $C$ does not blow-up as we vary over $\omega$. In line with the intuition expressed above, the decay of this Fourier transform is well-known to relate to the curvature of the surface, see Stein \cite{stein1993harmonic} for a discussion of the fundamental results on oscillatory integrals and their relation to the Fourier transform of surface measures.

A related problem is the sublevel set problem: given a real-valued function $u$ and a constant $c$ what conditions should we impose so that estimates of the form $|\{x\in\Omega:|u(x)-c|\leq\alpha\}|\leq C\alpha^\delta$ hold for appropriate $\Omega$. It is typical to seek estimates independent of $c$ so that the problem is invariant under shifting $u$ by a constant, and we can assume without loss of generality that $c=0$.

That this should be related is apparent from the intuition expressed above, that oscillatory integrals should observe greater cancellation if $u$ does not spend too much time near a given value. And just as in the oscillatory integral case, we are often not only interested in the best possible $\delta$, but also in the uniformity of the constant $C$ in a class of functions. As mentioned above, typically the class of functions for which we seek uniform bounds is those having $Du$ bounded below by some positive constant, where $D$ is a differential operator. For differential operators where $u$ itself does not appear explicitly, in particular for linear differential operators, this condition is invariant under translation of $u$ by a constant, so uniform estimates are necessarily independent of $c$.

We now recall some discussion from the paper of Carbery-Christ-Wright \cite{carbery1999multidimensional}. Oscillatory integral estimates of the form above are known to imply the corresponsing sublevel set estimates. In the case of monomial derivatives, that is, the differential operators $D^\beta=\partial^{\beta_1}_1\dots\partial^{\beta_n}_n$ for $\beta=(\beta_1,\dots,\beta_n)\in\mathbb{N}_0^n$, it is known that we can take $\delta=(|\beta_1|+\dots+|\beta_n|)^{-1}$ in the sublevel set problem when $D^\beta u$ is bounded below by a positive constant, and that this is the optimal $\delta$ provided no extra conditions are imposed. The same $\delta$ works in the oscillatory integral problem, provided some slightly more restrictive conditions are also imposed, we shall not discuss these here.

However, in all but dimension $1$, this $\delta$ has not been shown to hold with a uniform constant. The main results of the Carbery-Christ-Wright paper are the following, and an analogue for oscillatory integrals with the slightly more restrictive conditions imposed.

\vspace{0.3cm}

\begin{CCW}
Denote the unit cube in $\mathbb{R}^n$ by $Q_n=[0,1]^n$. There exists $C, \delta>0$ such that for any smooth $u$ in a neighbourhood of $Q_n$ having $D^\beta u\geq 1$ on $Q_n$, we have the sublevel set estimates
$$|\{x\in Q_n:|u(x)|\leq\varepsilon\}|\leq C\varepsilon^\delta.$$
Note that $C$ and $\delta$ do not depend on $u$.
\end{CCW}
They also observe that their arguments make sense when $Q_n$ is replaced with different convex sets. Note that this result says nothing about the optimality of $\delta$. It remains open in higher dimensions as to what is the best $\delta$ for which such bounds hold with a uniform constant.

In higher dimensions, we have access to many interesting differential operators, one natural example being the Laplacian. For the Laplacian, one can obtain estimates with $\delta=1/2$ and a non-uniform constant depending on the derivatives of the function up to third order, but in the paper of Carbery-Christ-Wright, it is shown that no uniform estimate can hold for any positive $\delta$.

\vspace{0.3cm}

\begin{CCW2}
For each $\varepsilon\in(0,1/2)$ there exists a smooth $u$ with $\Delta u \equiv 1$ on $[0,1]^2$ but also satisfying the estimate $|\{x\in[0,1]^2:|u(x)|\leq\varepsilon\}|\geq 1-\varepsilon$.
\end{CCW2}
This result extends to higher dimensions by considering this family of counterexamples, and extending them as functions in higher dimensions by asking that they remain constant in the additional variables. Thus no uniform sublevel set estimate holds for any operator consisting of the Laplacian in $2$ or more of the variables plus additional terms - in particular, we observe failure for the heat and wave operators in $2$ or more spatial dimensions.

The above result for the Laplacian is complimented by the result of Steinerberger \cite{steinerberger2019sublevel} which we shall discuss in this paper. It states:

\vspace{0.3cm}

\begin{stei}
There exists a constant $c_n>0$ depending only on the dimension so that if $u:B\rightarrow\mathbb{R}$ satisfies $\Delta u\geq 1$ in $B$, where $B$ is a unit Euclidean ball in $\mathbb{R}^n$, then
$$\|u\|_{L^{\infty}(B)}\cdot|\{x\in B:|u(x)|\geq c_n\}|\geq c_n.$$
\end{stei}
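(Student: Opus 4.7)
The plan is to turn the hypothesis $\Delta u\geq 1$ into a useful integral identity by integrating $\Delta u$ against a fixed nonnegative bump function $\phi\in C_c^\infty(B)$ with $\phi\geq 0$ and $\int_B \phi > 0$. Since $\phi$ is compactly supported in $B$, integration by parts (applied twice, with no boundary contribution) gives
\[
\int_B u\,\Delta\phi \;=\; \int_B \phi\,\Delta u \;\geq\; \int_B \phi \;=:\; A \,>\,0.
\]
This is the core identity: we have transferred the hypothesis on $\Delta u$ into a lower bound on a fixed linear functional of $u$, where the test function $\Delta\phi$ is a universal (dimension-dependent) object. Crucially, the constants $M:=\|\Delta\phi\|_{L^\infty(B)}$ and $N:=\|\Delta\phi\|_{L^1(B)}$ depend only on the choice of $\phi$, hence only on $n$.

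Next, for a threshold $c_n>0$ to be chosen, I would split the left-hand side according to whether $|u|$ is large or small:
\[
A \;\leq\; \int_{\{|u|\geq c_n\}} u\,\Delta\phi \;+\; \int_{\{|u|<c_n\}} u\,\Delta\phi.
\]
The second piece is bounded crudely by $c_n \cdot N$, and the first piece is bounded by $\|u\|_{L^\infty(B)} \cdot M \cdot |\{x\in B:|u(x)|\geq c_n\}|$. If I choose $c_n$ small enough (depending only on $n$) that $c_n N \leq A/2$, the small-$u$ term is absorbed, leaving
\[
\tfrac{A}{2} \;\leq\; M\,\|u\|_{L^\infty(B)}\cdot \bigl|\{x\in B:|u(x)|\geq c_n\}\bigr|.
\]
Rearranging yields the desired inequality with $c_n$ redefined as the minimum of the threshold and $A/(2M)$.

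The only real choice in the argument is the bump function $\phi$; everything else is a two-line integration by parts plus a split-the-integral trick. There is no genuine obstacle: the hard work lives in the classical $L^\infty$-estimate for harmonic/subharmonic-type problems, but here we are only extracting a weak weighted-measure conclusion, so pairing against a single fixed test function is enough. I should note that the same strategy immediately adapts to any differential operator $L$ possessing a smooth, compactly-supported function $\phi\geq 0$ with $\int\phi>0$ and a well-defined formal adjoint $L^\ast$ applied to $\phi$ in $L^1\cap L^\infty$; this is precisely the flexibility the paper advertises, and it is why the identical proof should carry over to the heat operator.
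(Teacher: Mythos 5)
Your proof is correct and takes a genuinely different, more elementary route than the paper's. The paper proceeds by contradiction via the derivative formula for ball averages: the derivative $\phi'(r)$ of the ball average (in the radius) is expressed as an integral of $\Delta u$ against the nonnegative kernel $(r^2-|x-y|^2)/2r$, giving $\phi'(r)\geq C_n r$ under the hypothesis; the fundamental theorem of calculus then shows that if the contradiction hypothesis controls the ball average, $u$ must be uniformly negative on an interior subdomain of definite measure. You instead pair $\Delta u$ against a single fixed $\phi\in C_c^\infty(B)$, move the derivatives onto $\phi$ via Green's identity, and split $\int u\,\Delta\phi$ by level set; nothing in the argument depends on the geometry of balls. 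This buys two things. First, replacing the crude $L^\infty$ bound on the large-$|u|$ piece by H\"older's inequality, $\int_{\{|u|\geq c_n\}}|u|\,|\Delta\phi|\leq M\|u\|_{L^p}|\{|u|\geq c_n\}|^{1/p'}$, immediately yields the full $L^p$ statement of Theorem \ref{LapThm} with a $p$-independent constant. Second, and more significantly, the same two lines go through for any linear differential operator $L$ with smooth coefficients, since $L^*\phi$ is automatically smooth, compactly supported, and hence in $L^1\cap L^\infty$; this entirely bypasses the machinery the paper develops for the heat operator (modified heatballs, integrating out auxiliary variables to tame an unbounded kernel) and in effect answers affirmatively, at least for smooth coefficients, the question posed in Section \ref{ques} about whether uniformly balancing sublevel inequalities hold for all linear differential operators. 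The paper's mean-value approach is more tightly wedded to the subsolution theory it emphasizes, which is a different, and also useful, way of seeing why the result is true. As a sanity check, verify against the paper's non-linear counterexample $Du=-\det\text{Hess }u$: your argument does not apply there precisely because a quadratic operator admits no formal adjoint acting on a single test function, so the Green-identity step has no analogue.
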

This result is in essence saying that if a sublevel set for some small $\varepsilon$ is large, so that its complement is small, then $\|u\|_{L^{\infty}(B)}$ must be large. Applying this to the above family of examples, we see that $u$ must be very large somewhere on the complement of that sublevel set. The intuition for this can be seen from a basic fact which shall be central to our proof - considering the averages of a function $u$ over balls as a function of the radius, we find that the derivative can be quantified exactly in terms of the Laplacian of $u$, indicating that functions with large Laplacian should have ``large" variations. The interesting aspect of this theorem is the quantification of this fact in a uniform way.

It is worth remarking that just as in the Carbery-Christ-Wright Theorem, the assumptions and hence the conclusions of the statement are invariant under replacing $u$ by $u-c$ for any real number $c$.

Steinerberger posed the basic question of whether we can replace $\|u\|_{L^{\infty}(B)}$ with some power of an $L^p$ norm. An affirmative answer to this question is given in the following theorem.

\vspace{0.3cm}

\begin{thm}\label{LapThm}
Given an open, bounded $\Omega\subseteq\mathbb{R}^n$, there exists a constant $c>0$ depending only on $n$ and $\Omega$ so that if $u:\Omega\rightarrow\mathbb{R}$ satisfies $\Delta u\geq 1$ on $\Omega$, then
$$\|u\|_{L^p(\Omega)}\cdot|\{x\in \Omega:|u(x)|\geq c\}|^{1/p'}\geq c$$
for each $1\leq p\leq \infty$, and $p'$ the conjugate exponent. Note that $c$ does not depend on $p$.
\end{thm}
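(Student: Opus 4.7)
My plan is to work directly from the ball-average identity and sidestep Steinerberger's theorem entirely. The starting point is the classical fact that, for smooth $u$, the spherical mean $M_r(x_0) := \frac{1}{|\partial B(x_0,r)|}\int_{\partial B(x_0,r)} u\,dS$ satisfies $\frac{d}{dr}M_r(x_0) = \frac{r}{n}\cdot\frac{1}{|B(x_0,r)|}\int_{B(x_0,r)}\Delta u$, so the hypothesis $\Delta u \geq 1$ gives $M_r(x_0) \geq u(x_0) + r^2/(2n)$. Averaging in $r$ yields the solid-ball version
$$\frac{1}{|B(x_0,R)|}\int_{B(x_0,R)} u\, dx \geq u(x_0) + \frac{R^2}{2(n+2)}$$
whenever $B(x_0,R)\subset\Omega$. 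Since $\Omega$ is open and bounded, I would fix once and for all a radius $R_0>0$ such that the $R_0$-interior $\Omega_{R_0}:=\{x\in\Omega:B(x,R_0)\subset\Omega\}$ has positive Lebesgue measure $V_0$, a quantity depending only on $\Omega$.

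Setting $c_0:=R_0^2/(4(n+2))$, I would then split on whether or not $|u(x_0)|\geq c_0$ holds for every $x_0\in\Omega_{R_0}$. In the affirmative case, the sublevel set $E:=\{|u|\geq c_0\}$ contains $\Omega_{R_0}$, giving $|E|\geq V_0$; further $|u|\geq c_0$ on $\Omega_{R_0}$ forces $\|u\|_p\geq c_0 V_0^{1/p}$, whence $\|u\|_p|E|^{1/p'}\geq c_0 V_0^{1/p+1/p'}=c_0V_0$. Otherwise some $x_0\in\Omega_{R_0}$ satisfies $|u(x_0)|<c_0$, and the ball identity at this point gives $\int_{B(x_0,R_0)}u\geq c_0|B(x_0,R_0)|$, hence $\|u\|_{L^1(\Omega)}\geq c_0|B(x_0,R_0)|=:A$.

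To convert this $L^1$ lower bound into the desired inequality uniformly in $p$, I would apply the elementary Hölder splitting
$$\|u\|_1=\int_{|u|<c}|u|\,dx+\int_E|u|\,dx\leq c|\Omega|+\|u\|_p\cdot|E|^{1/p'},$$
which rearranges to $\|u\|_p|E|^{1/p'}\geq A-c|\Omega|$. Choosing the final threshold $c:=\min(c_0,\,c_0V_0,\,A/(1+|\Omega|))$, a quantity depending only on $n$ and $\Omega$, produces $\|u\|_p|E|^{1/p'}\geq c$ in both cases and for every $p\in[1,\infty]$. I expect the ball-average identity and the dichotomy to be routine; the conceptual heart of the argument is the Hölder splitting, as this is the single step that cleanly decouples the exponent $p$ and delivers a constant independent of it. It is also the step most plausibly adaptable to other operators, for example the heat operator, once an analogous monotonicity for the corresponding averages has been used to produce a matching $L^1$ lower bound.
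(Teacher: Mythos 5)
Your proof is correct, and while it rests on the same two ingredients the paper uses --- the mean-value monotonicity coming from $\Delta u\geq 1$, and a H\"older splitting of an $L^1$ integral into the sublevel and superlevel pieces --- it is organized differently enough to count as a genuinely alternative route. The paper argues by contradiction: assuming $\|u\|_p|\{|u|\geq c\}|^{1/p'}\leq c$, it applies the H\"older splitting to the solid-ball average of $u$ itself, concluding that the average over $B_{\delta/2}(x)$ is at most $(1+|B_{\delta/2}|^{-1})c$; then it integrates the derivative formula (the analogue of your $M_r\geq u(x_0)+r^2/(2n)$ fact) to conclude $u(x)\leq -(C_n/16)\delta^2$ throughout $\Omega_\delta$, which forces both factors of the product to be uniformly large, a contradiction. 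You instead argue directly via a dichotomy: either $|u|\geq c_0$ on all of $\Omega_{R_0}$, which trivially makes both factors large, or some interior point has $|u(x_0)|<c_0$, in which case the solid-ball bound gives $\|u\|_{L^1}\geq A$, and you apply the H\"older splitting once, over the whole domain rather than over a ball, to transfer that $L^1$ lower bound to the $L^p$ product. The two approaches buy roughly the same thing, and the effort is comparable; yours has the merit of being direct and of isolating cleanly the two regimes (either the superlevel set is automatically large, or it is the mean-value mechanism that must be invoked), whereas the paper's contradiction format makes the central step --- controlling the ``bad'' part of a ball average via the assumed smallness of the product --- the explicit pivot it then reuses for the heat operator. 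Your Case 1 briefly uses $E$ for $\{|u|\geq c_0\}$ and then $\{|u|\geq c\}$ in the H\"older line; since $c\leq c_0$ the inclusion $\{|u|\geq c_0\}\subseteq\{|u|\geq c\}$ makes everything consistent, but it is worth fixing the notation.
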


This result is less interesting in the case $p=1$, since then $p'=\infty$ and this is just giving a lower bound on the $L^1$ norm, which follows more directly from a basic property of subharmonic functions detailed in our proof.

We shall also show how the proof can be extended to work for the heat operator, for which uniform sublevel estimates (including in the case of one spatial variable, as we will examine later) fail.

\vspace{0.3cm}

\begin{thm}\label{HeatThm}
Let $\Omega\subseteq\mathbb{R}^{n+1}$ be open and bounded. We shall denote points in $\mathbb{R}^{n+1}$ by $(x,t)\in\mathbb{R}^n\times\mathbb{R}$ and use $\Delta_x$ for the Laplacian in the first $n$ components. The usual heat operator will be denoted $H=\Delta_x-\partial_t$. Then for those $u$ satisfying $Hu\geq 1$ on $\Omega$ we have the estimate
$$\|u\|_{L^p(\Omega)}\cdot|\{x\in \Omega:|u(x)|\geq c\}|^{1/p'}\geq c$$
for each $1\leq p\leq \infty$, and $p'$ the conjugate exponent, where $c>0$ depends only on $n$ and $\Omega$ and not on $p$ or $u$.
\end{thm}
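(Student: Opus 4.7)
The plan is to follow the test-function argument used for Theorem~\ref{LapThm}, replacing the self-adjoint Laplacian by the formal adjoint $H^* = \Delta_x + \partial_t$ of the heat operator. For any smooth $u$ and any smooth $\phi$ compactly supported in $\Omega$, integrating by parts in both space and time --- with no boundary contributions since $\phi$ vanishes on $\partial\Omega$ --- yields the adjoint identity
$$\int_\Omega (Hu)\,\phi\,dx\,dt \;=\; \int_\Omega u\,H^*\phi\,dx\,dt.$$
The only modification from the elliptic case is the sign change on $\partial_t$, which has no structural impact on what follows.

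I would fix once and for all a non-negative smooth bump $\phi$ compactly supported in $\Omega$ with $\int_\Omega \phi = 1$, so that $\|H^*\phi\|_{L^1(\Omega)}$ and $\|H^*\phi\|_{L^\infty(\Omega)}$ become finite constants depending only on $n$ and $\Omega$. Since $Hu \geq 1$ and $\phi \geq 0$, the adjoint identity gives $\int_\Omega u\,H^*\phi \geq 1$. Writing $E = \{(x,t)\in\Omega : |u(x,t)|\geq c\}$ and using $|u| < c$ on $E^c$, we obtain
$$\int_E u\,H^*\phi \;\geq\; 1 - c\,\|H^*\phi\|_{L^1(\Omega)}.$$
Choosing $c \leq 1/(2\|H^*\phi\|_{L^1(\Omega)})$ keeps the right-hand side at least $\tfrac12$.

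A single application of Hölder's inequality on $E$ then bounds the same quantity from above,
$$\tfrac12 \;\leq\; \int_E u\,H^*\phi \;\leq\; \|u\|_{L^p(\Omega)}\,\|H^*\phi\|_{L^\infty(\Omega)}\,|E|^{1/p'},$$
giving $\|u\|_{L^p(\Omega)}\cdot|E|^{1/p'} \geq 1/(2\|H^*\phi\|_{L^\infty(\Omega)})$. Taking $c$ to be the smaller of $1/(2\|H^*\phi\|_{L^1(\Omega)})$ and $1/(2\|H^*\phi\|_{L^\infty(\Omega)})$ produces a single constant depending only on $n$ and $\Omega$, independent of $p$ and $u$, which serves in both places in the conclusion.

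I do not foresee any serious obstacle. The only operator-specific input is the identification of the formal adjoint, and the compact support of $\phi$ removes boundary terms in integration by parts in both spatial and temporal directions. Indeed the method is essentially indifferent to the specific linear differential operator, which matches the author's remark that the proof for the Laplacian highlights a step amenable to adaptation to other settings.
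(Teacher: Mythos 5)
Your proof is correct, and it takes a genuinely different and substantially simpler route than the paper's. The paper proves Theorem~\ref{HeatThm} by developing a mean-value machinery for the heat operator: it introduces heatball averages with the Watson kernel $|x-y|^2/(t-s)^2$, derives the Evans derivative formula for these averages, and then has to contend with the fact that this kernel blows up at the centre of the heatball --- the fix being to pass to the so-called ``modified heatballs'' of Kuptsov and Watson, which requires embedding $u$ in $m+n$ spatial dimensions for $m\geq 3$ and integrating out the auxiliary variables to obtain a bounded kernel. Only then does the contradiction argument of Theorem~\ref{LapThm} go through. Your test-function argument sidesteps all of this. Fixing a non-negative bump $\phi\in C_c^\infty(\Omega)$ with unit mass, the identity $\int (Hu)\phi=\int u\,H^*\phi$ holds by two integrations by parts in $x$ and one in $t$, and since $Hu\geq 1$ and $\phi\geq 0$ you get $\int_\Omega u\,H^*\phi\geq 1$ directly. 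Splitting this integral over $E=\{|u|\geq c\}$ and its complement, bounding the complement contribution by $c\|H^*\phi\|_{L^1}$, and applying H\"older on $E$ gives $\|u\|_{L^p(\Omega)}\,|E|^{1/p'}\geq 1/(2\|H^*\phi\|_{L^\infty})$ for all $1\leq p\leq\infty$ once $c\leq 1/(2\|H^*\phi\|_{L^1})$; taking $c$ to be the smaller of the two candidate constants closes the argument. The test function $H^*\phi$ is automatically bounded and integrable, so the kernel-unboundedness problem that drives the paper's modified-heatball detour simply never arises.

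Two remarks worth keeping in mind. First, your opening sentence describes this as ``the test-function argument used for Theorem~\ref{LapThm},'' but the paper does not use a test-function argument there either --- it uses the classical derivative formula for ball averages, $\phi'(r)=\frac{1}{|B_r|}\int_{B_r(x)}\frac{r^2-|x-y|^2}{2r}\Delta u\,dy$, and the fundamental theorem of calculus; your duality argument would also give a shorter proof of Theorem~\ref{LapThm}. Second, your closing observation that ``the method is essentially indifferent to the specific linear differential operator'' is both the strength and a point of caution: it does indeed show that the uniformly balancing sublevel inequality holds for \emph{any} linear constant-coefficient (or smooth-coefficient) operator $D$ on a bounded domain, since one can always pair against a compactly supported bump and write $\int(Du)\phi=\int u\,D^*\phi$, with $D^*\phi$ bounded. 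This gives an affirmative answer to the paper's Question for all linear operators by a route the paper does not take. Against that gain, the paper's mean-value approach buys a different kind of insight: it links the inequality to the sub-solution/maximum-principle structure of the operator and to monotone families of averages, which is the theme the author wants to emphasize in Section~\ref{ques}; your duality proof is more efficient but says nothing about that structure.
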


To avoid having to repeatedly explain the type of inequality to which we refer, we shall describe any inequality having the form
$$\|u\|_{L^p(\Omega)}\cdot|\{x\in \Omega:|u(x)|\geq c\}|^{1/p'}\geq c$$
over some class of functions $u$, with $c>0$ independent of $u$, as a Uniformly Balancing Sublevel Inequality. This name is chosen to stress that the $L^p$ norm is balancing the potentially small complement of the sublevel set, and does so in a uniform way. 

These inequalities are, as one would expect, consequences of uniform sublevel set estimates, since we know that each fixed sublevel set is uniformly small, and hence its complement is uniformly large, which also gives a uniform largeness of the $L^p$ norm. We discuss this observation in the following section.

As mentioned above, a central aspect of the proof of Theorem \ref{LapThm} is a derivative formula for a parameterised family of averages that is expressed in terms of the relevant differential operator - the Laplacian. This will also be the case for Theorem \ref{HeatThm}, however, the analogous formula has an issue that we will need to resolve, which will require us to use some slightly non-standard modifications of the averages considered. 

The paper will be laid out as follows. Section \ref{disc} follows on from the discussion of the introduction, providing a more in-depth discussion of how these inequalities relate to the uniform sublevel set problem and some other properties that do not fit neatly into this introduction. Section \ref{proofs} contains the proofs of the main results, as well as some other results not due to the author, but reformulated in a context which stresses the ways it will be helpful for us, and perhaps that it may be helpful elsewhere. Section \ref{ques} provides a little further discussion of the inequalities that must be postponed until after the proofs, followed by some comments on some naturally-arising questions.

In particular, for which differential operators does a uniformly balancing sublevel inequality hold?

\section{Properties of the inequalities}\label{disc}
Foremost, we confirm that uniformly balancing sublevel inequalities follow from uniform sublevel set estimates. This observation creates a strict hierarchy of problems - uniform oscillatory integral estimates imply uniform sublevel set estimates which imply uniformly balancing sublevel inequalities. Thus further results on these inequalities could provide some insight into the uniform sublevel set problem.

\vspace{0.3cm}

\begin{prop}\label{hier}
Suppose that $\Omega$ is open and bounded and we have the sublevel set estimates $|\{x\in\Omega:|u(x)|\leq\varepsilon\}|\leq C\varepsilon^\delta$ for certain functions $u$ on $\Omega$. Then we have
$$\|u\|_{L^p(\Omega)}\cdot|\{x\in \Omega:|u(x)|\geq c\}|^{1/p'}\geq c$$
for each $1\leq p\leq \infty$, where $p'$ is the conjugate exponent, and $c$ depends only on $C$, $\Omega$ and $\delta$.
\end{prop}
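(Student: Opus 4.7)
The plan is to turn the sublevel set estimate into a \emph{quantitative} lower bound on a single superlevel set, and then observe that once such a superlevel set is in hand, the target inequality for every $p$ drops out of the trivial bound $\|u\|_{L^p(\Omega)}^p \geq \varepsilon^p\,|\{|u|\geq\varepsilon\}|$ together with the identity $1/p + 1/p' = 1$.

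Concretely, I would first choose $\varepsilon_0 > 0$ so small that $C\varepsilon_0^\delta \leq |\Omega|/2$; this is possible because $C$, $\delta$ and $|\Omega|$ are fixed and $\varepsilon_0$ is not required to depend on $u$. The hypothesised sublevel set estimate then gives, for every admissible $u$,
\[
|\{x\in\Omega:|u(x)|\geq \varepsilon_0\}| \;=\; |\Omega|-|\{x\in\Omega:|u(x)|\leq \varepsilon_0\}| \;\geq\; |\Omega|/2.
\]
This is the one genuinely non-trivial input: a uniform lower bound on a specific superlevel set.

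Now fix any $c\in(0,\varepsilon_0]$, so that $\{|u|\geq c\}\supseteq\{|u|\geq\varepsilon_0\}$, and estimate separately the two factors in the target inequality. Restricting integration to the superlevel set gives
\[
\|u\|_{L^p(\Omega)} \;\geq\; \varepsilon_0\,|\{|u|\geq\varepsilon_0\}|^{1/p} \;\geq\; \varepsilon_0\,(|\Omega|/2)^{1/p},
\]
while the containment above yields $|\{|u|\geq c\}|^{1/p'} \geq (|\Omega|/2)^{1/p'}$. Multiplying and using $1/p+1/p'=1$, the $p$-dependent exponents collapse and we get the uniform lower bound $\varepsilon_0\cdot(|\Omega|/2)$, which depends only on $C$, $\delta$ and $|\Omega|$. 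Choosing $c:=\min\bigl(\varepsilon_0,\varepsilon_0\,|\Omega|/2\bigr)$ then yields the inequality for every $1\leq p\leq\infty$ simultaneously.

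There is no real obstacle here; the only thing that needs a brief comment is the endpoints. For $p=\infty$ one interprets $|\{|u|\geq c\}|^{1/p'}=|\{|u|\geq c\}|$ and uses $\|u\|_{L^\infty(\Omega)}\geq\varepsilon_0$; for $p=1$ one uses the convention $m^{1/\infty}=1$ for $m>0$, which is consistent with taking $p'\to\infty$ above, and the conclusion reduces to the statement $\|u\|_{L^1(\Omega)}\geq c$, which is already contained in the $L^p$ bound.
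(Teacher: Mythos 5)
Your argument is correct and follows essentially the same route as the paper's: both choose $\varepsilon_0$ so that $C\varepsilon_0^\delta\leq|\Omega|/2$ to get a uniform lower bound on the $\varepsilon_0$-superlevel set, then apply the Chebyshev-type bound $\|u\|_{L^p}\geq\varepsilon_0|\{|u|\geq\varepsilon_0\}|^{1/p}$ and the identity $1/p+1/p'=1$ so the constant becomes $p$-independent. The only cosmetic difference is that you bound the two factors separately while the paper multiplies the Chebyshev inequality through by the $1/p'$-power of the superlevel measure; your treatment has the small advantage of covering $p=\infty$ directly rather than by a limiting argument.
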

\begin{proof}
By Chebyshev's inequality, we have for each $1\leq p<\infty$ and $\varepsilon>0$
$$\varepsilon|\{x\in\Omega:|u(x)|\geq\varepsilon\}|^{1/p}\leq\|u\|_{L^p(\Omega)}$$
and thus we obtain
$$\|u\|_{L^p(\Omega)}\cdot|\{x\in\Omega:|u(x)|\geq \varepsilon\}|^{1/p'}\geq\varepsilon|\{x\in\Omega:|u(x)|\geq \varepsilon\}|.$$
However, the set on the right hand side is the complement of the sublevel set, and so its measure can be bounded below by $|\Omega|-C\varepsilon^\delta$. Thus we can choose $\varepsilon$ depending only on $C, \Omega$ and $\delta$ so that this is in turn bounded below by $|\Omega|/2$, obtaining
$$\|u\|_{L^p(\Omega)}\cdot|\{x\in\Omega:|u(x)|\geq \varepsilon\}|^{1/p'}\geq\varepsilon|\Omega|/2.$$
Now if $|\Omega|/2\geq 1$, we can bound this below by $\varepsilon$ and take $c=\varepsilon$, else note that replacing $\varepsilon$ by $\varepsilon|\Omega|/2$ on the left-hand side only makes it bigger, in which case we can take $c=\varepsilon|\Omega|/2$.

In either case $c$ depended only on $C$, $\Omega$ and $\delta$. We can take the limit $p\rightarrow\infty$ or give a slightly modified direct argument to obtain the $p=\infty$ result.
\end{proof}
Primarily, we are interested in the problem of whether a uniformly balancing sublevel inequality holds in the set of functions with $Du\geq A>0$, where $D$ is a differential operator, typically linear (for non-linear operators we may consider $Du\leq A<0$ separately, since we cannot simply replace $u$ with $-u$ in this case). At present, we do not know any linear differential operators such that this fails, but we do have a non-linear counterexample - we shall consider $Du=-\det \text{Hess }u$.

Under additional assumptions, such as assuming $u$ to be convex, there are uniform sublevel set estimates associated to the determinant of the Hessian, see Carbery \cite{carbery2010uniform}. We use here an example from the paper of Gressman \cite{gressman2011uniform}, which considers some uniform sublevel set estimates and gives remarks on situations when the uniformity fails. We see that the example given there also gives failure of a uniformly balancing sublevel inequality, in a rather striking way.

Concretely, we consider the operator $Du=(\partial^2_{xy}u)^2-(\partial^2_{xx}u)(\partial^2_{yy}u)$ on $[0,1]^2$, and the family of functions $u_N(x,y)=N^{-1}e^x\sin(Ny)$. Clearly we have that $Du_N=e^{2x}\geq 1$, but given any $c$, we can always take $N$ large enough that $\{x\in[0,1]^2:|u_N(x)|\geq c\}$ is empty, so no uniformly balancing sublevel inequality holds.

We next provide some remarks on the formulation of the uniformly balancing sublevel inequalities. With regards to the power of $1/p'$, we remark that this may not be the largest power possible for a given class of functions and a choice of $p$, for instance in our main theorems. Certainly, in the proof of Proposition \ref{hier} there is no optimal power and it is clear how we could obtain any larger power. It may also be the case that the best power we can obtain in some situations may be less than $1/p'$. Such variant inequalities would still capture the essential feature of an $L^p$ norm being used to ``balance" a product with a superlevel set measure that may be small.

There are, however, certain natural aspects to this choice, namely that we are then considering the product of an $L^p$ norm with an $L^{p'}$ norm, which naturally arises in H\"older's inequality, and consequently in the duality of the $L^p$ spaces. Though not a necessity for it, this natural choice leads to us being able to bound the product below by a constant independent of $p$ in Proposition \ref{hier}, in some discussion below regarding the effect of change of variables, and in the main theorems. This is by no means evidence to suggest that $1/p'$ is the correct power to use, but it is certainly a convenient one.

We might also be interested in considering the superlevel set at a height different from the constant we use as a lower bound. Thus more generally, we might consider inequalities of the form
$$\|u\|_{L^p(\Omega)}\cdot|\{x\in \Omega:|u(x)|\geq c_1\}|^{1/p'}\geq c_2$$
with $c_1,c_2>0$.

Such a consideration would only really be of importance if we were concerned with optimality, since as observed in the proof of Proposition \ref{hier}, replacing $c_1$ and $c_2$ by smaller values still yields a true inequality, so we can certainly replace $c_1$ and $c_2$ with their minimum to assume they are equal. We will not concern ourselves with optimality in the main proofs below, but stress our methods do not merely show existence of a suitable constant, but also allow us to find them constructively - even though there is no reason to expect they are anything close to optimal. 

One other convenient property of uniformity is scaling - for instance, if in the Carbery-Christ-Wright Theorem we instead consider $Du\geq A>0$, we can apply results in the class where $Du\geq 1$ by considering $u/A$ in place of $u$, and it is clear how the constant appearing in the sublevel set bounds must scale. Uniformly balancing sublevel inequalities have the same property - indeed, if such an inequality is known to apply to $u$ in a given class of functions, then whenever $v=Au$, we can apply the inequality to $v/A$ and rearrange the result to obtain
$$\|u\|_{L^p(\Omega)}\cdot|\{x\in\Omega:|u(x)|\geq cA\}|^{1/p'}\geq cA.$$
Thus for linear differential operators we can scale the inequality $Du\geq A$ to apply results in the class $Du\geq 1$, and more generally if the operator has some homogeneity so that $D(\lambda u)=\lambda^\alpha Du$, we can make similar statements - for instance, this applies to the determinant of the Hessian.

Furthermore, uniformity allows us to take limits in the inequality to obtain results for ``rough" functions. For instance, by smoothing out a function that only satisfies a differential inequality in a weak/distributional sense, we can apply the inequality first to smooth approximations and then use a standard limiting argument to conclude that it holds for more general functions. We shall not formulate this precisely, but simply note that our results can thus be extended to greater generality if desired.

There are other useful properties shared with uniform sublevel set estimates. The next example we give shows that we can lift these inequalities to higher dimensions, for instance, Theorem \ref{LapThm} also implies that a uniformly balancing sublevel inequality holds for the Laplacian in the first two variables considered as a differential operator on $\mathbb{R}^3$. Concretely, we can say the following.

\vspace{0.3cm}

\begin{prop}
Let $\Omega_1\subseteq\mathbb{R}^{n_1}$ be open and bounded and suppose that within some set of functions $S$ on $\Omega_1$, we have the uniformly balancing sublevel inequality
$$\|u\|_{L^p(\Omega_1)}\cdot|\{x\in\Omega_1:|u(x)|\geq c\}|^{1/p'}\geq c.$$
Let $\Omega_2$ be a bounded open set in $\mathbb{R}^{n_2}$ and suppose $\Omega\subseteq\mathbb{R}^{n_1+n_2}$ contains $\Omega_1\times\Omega_2$. Then for functions $v$ on $\Omega$ such that their restrictions to $\Omega_1\times\{y\}$ lies in $S$ for each $y\in\Omega_2$, we have
$$\|v\|_{L^p(\Omega)}\cdot|\{x\in\Omega:|v(x)|\geq c\}|^{1/p'}\geq c|\Omega_2|.$$
\end{prop}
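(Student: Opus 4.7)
The plan is to reduce to the one-dimensional fibre estimate and then stitch the fibres together via Hölder's inequality and Fubini. First, for each $y \in \Omega_2$, denote by $v_y$ the restriction of $v$ to $\Omega_1 \times \{y\}$, viewed as a function on $\Omega_1$. By hypothesis $v_y \in S$, so the assumed uniformly balancing sublevel inequality applies to give
$$\|v_y\|_{L^p(\Omega_1)} \cdot |\{x \in \Omega_1 : |v_y(x)| \geq c\}|^{1/p'} \geq c$$
with the same $c$, uniformly in $y$. Write $B(y) = \|v_y\|_{L^p(\Omega_1)}$ and $A(y) = |\{x \in \Omega_1 : |v_y(x)| \geq c\}|$, so that $B(y) A(y)^{1/p'} \geq c$ pointwise in $y$.

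Next I integrate this pointwise inequality over $\Omega_2$, which yields $\int_{\Omega_2} B(y) A(y)^{1/p'} \, dy \geq c |\Omega_2|$. Then I apply Hölder's inequality with exponents $p$ and $p'$ on $\Omega_2$ to the integrand $B(y) \cdot A(y)^{1/p'}$, obtaining
$$c |\Omega_2| \leq \left( \int_{\Omega_2} B(y)^p \, dy \right)^{1/p} \left( \int_{\Omega_2} A(y) \, dy \right)^{1/p'}.$$
This is the key step where the $1/p'$ power turns out to be exactly the one Hölder can absorb.

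Finally I reinterpret the two factors. Since $\Omega \supseteq \Omega_1 \times \Omega_2$, Fubini gives
$$\int_{\Omega_2} B(y)^p \, dy = \int_{\Omega_2} \int_{\Omega_1} |v(x,y)|^p \, dx \, dy \leq \int_{\Omega} |v|^p = \|v\|_{L^p(\Omega)}^p,$$
and similarly $\int_{\Omega_2} A(y) \, dy \leq |\{(x,y) \in \Omega : |v(x,y)| \geq c\}|$. Substituting these into the Hölder bound yields the claimed inequality. For the endpoints $p=1$ and $p=\infty$ the same idea works with trivial modifications: at $p=1$ one just integrates $B(y) \geq c$ directly, and at $p=\infty$ one uses $\|v\|_{L^\infty(\Omega)} \geq B(y)$ inside the integral $\int_{\Omega_2} B(y) A(y) \, dy \geq c|\Omega_2|$.

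I do not anticipate a real obstacle here; the only thing that needs care is lining up the exponents so that Hölder produces exactly $\|v\|_{L^p(\Omega)}$ and $|\{|v|\geq c\}|^{1/p'}$ rather than some other combination, and checking that the containment $\Omega \supseteq \Omega_1 \times \Omega_2$ is used in the right direction (making both factors on the right-hand side of the final inequality only larger).
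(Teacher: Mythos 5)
Your proposal is correct and follows essentially the same route as the paper: apply the fibre-wise inequality, integrate over $\Omega_2$, use H\"older with exponents $p$ and $p'$, and then identify the resulting factors with $\|v\|_{L^p(\Omega_1\times\Omega_2)}$ and $|\{|v|\geq c\}\cap(\Omega_1\times\Omega_2)|^{1/p'}$ via Fubini before enlarging to $\Omega$. The paper compresses this into two norm identities followed by a single invocation of H\"older, but the underlying argument is the one you gave.
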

\begin{proof}
We write $(x,y)$ for an element of $\mathbb{R}^{n_1}\times\mathbb{R}^{n_2}$. Let $v$ be as in the statement, then
\begin{equation}\label{ineq1}
c\leq\|v(\cdot,y)\|_{L^p(\Omega_1)}\cdot|\{x\in\Omega_1:|v(x,y)|\geq c\}|^{1/p'}
\end{equation}
It is clear that
\begin{align*}
\|v\|_{L^p(\Omega_1\times\Omega_2)}&=\|(\|v(\cdot,y)\|_{L^p(\Omega_1)})\|_{L^p(\Omega_2)}\\
|\{(x,y)\in\Omega_1\times\Omega_2\colon|v(x,y)|\geq c\}|^{1/p'}&=\||\{x\in\Omega_1:|v(x,y)|\geq c\}|^{1/p'}\|_{L^{p'}(\Omega_2)}
\end{align*}
hence an application of H\"older's inequality to inequality (\ref{ineq1}) yields the result for $\Omega_1\times\Omega_2$, which in turn gives the result on $\Omega$.
\end{proof}
We note the last step of the proof holds more generally - if a uniformly balancing sublevel inequality holds on some set, then it holds on any larger set.

Moreover, we can also consider the effect of diffeomorphisms on uniformly balancing sublevel inequalities. Concretely, suppose we have a uniformly balancing sublevel inequality for functions $u$ on $\Omega$ and a diffeomorphism $\phi:\Omega\rightarrow\Omega'$. By the change of variables formula we have
$$\int_\Omega|u(x)|^p\,dx=\int_{\Omega'}|u(\phi^{-1}(x'))|^p|\det J\phi^{-1}(x')|\,dx'$$
where $J\phi^{-1}$ is the Jacobian of $\phi^{-1}$. Let $M$ be the supremum of $|\det J\phi^{-1}(x')|$. Then we have
$$\|u\|_{L^p(\Omega)}\leq \|u\circ\phi^{-1}\|_{L^p(\Omega')}M^{1/p}.$$
As the other term in the uniformly balancing sublevel inequality is an $L^{p'}$ norm, the same reasoning gives
\begin{align*}
c&\leq\|u\|_{L^p(\Omega)}\cdot|\{x\in\Omega:|u(x)|\geq c\}|^{1/p'}\\
&\leq M\|u\circ\phi^{-1}\|_{L^p(\Omega')}\cdot|\{x'\in\Omega':|u\circ\phi^{-1}(x')|\geq c\}|^{1/p'}.
\end{align*}
Hence uniformly balancing sublevel inequalities for a class of $u$ on $\Omega$ yield uniformly balancing sublevel inequalities for the class of $u\circ\phi^{-1}$ on $\Omega'$. Note that going between these classes of functions presents no loss when the Jacobian determinant is constant, such as in the case of invertible linear transformations.

An important consequence of this is the following: Suppose we know a uniformly balancing sublevel inequality holds in the class where $Du(x)\geq 1$, for $D$ a differential operator. For clarity say that $D$ is written in terms of ``$x$ coordinates". Then if we express $D$ in terms of $x'$ coordinates, with $\phi$ being the diffeomorphism giving the change of coordinates, we know that in the class where $D(u\circ\phi^{-1})(x')\geq 1$, a uniformly balancing sublevel inequality holds. In short, the class of differential operators for which uniformly balancing sublevel inequalities hold is invariant under change of coordinates.

As an example, let us consider which constant coefficient linear differential operators satisfy uniformly balancing sublevel inequalities. By using invertible linear transformations, one can reduce the cases to study to certain canonical forms. For example, the theory of quadratic forms tells us that to understand the homogeneous second order examples in $\mathbb{R}^n$, we need only understand those having associated polynomials of the form
$$\sum_{i=1}^{m_1}x_i^2-\sum_{j=m_1+1}^{m_2}x_j^2$$
for $m_2\leq n$.

In $\mathbb{R}^2$, this allows us to give a complete picture - in fact, a uniformly balancing sublevel inequality holds in all cases. The quadratic form $x_1^2+x_2^2$ corresponds to the Laplacian, hence follows from Theorem \ref{LapThm}. All the others satisfy the Carbery-Christ Wright Theorem, so in fact a uniform sublevel estimate holds. This is obvious in all cases but $x_1^2-x_2^2$, but this is $(x_1-x_2)(x_1+x_2)$, so setting $x=x_1-x_2$ and $y=x_1+x_2$, we obtain $xy$ via change of coordinates, which is of the correct form.

With regards to the failure of uniform sublevel set estimates, we note that the counterexample for the Laplacian by Carbery-Christ-Wright \cite{carbery1999multidimensional} relies on Mergelyan's Theorem. One can extend this construction to other differential operators without any difficulty provided an analogue of Mergelyan's Theorem holds for that operator. This is a huge request, and although it may be possible, it is much more reasonable to work with Runge's Theorem, for which many generalisations have been considered - in particular, for the heat operator.

Both Runge's Theorem and Mergelyan's Theorem can be found in Rudin's book \cite{rudin1966real}, and the analogue of Runge's Theorem for the heat operator was given by Jones \cite{jones1975approximation}. Runge's Theorem and Mergelyan's Theorem concern holomorphic functions in the plane, but by considering the real and imaginary parts separately can be considered as a result concerning harmonic functions in the plane. We shall state the necessary consequences of these results during the proof of the forthcoming proposition.

We shall establish, as claimed in the introduction, that even with one spatial dimension we have no uniform sublevel estimate for the heat operator. At the same time, we shall re-establish the Carbery-Christ-Wright counterexample, using an alternative proof communicated by James Wright. We stress that this statement is by no means as general as we could make it, in light of other situations where a Runge-type theorem holds - see, for instance, Kalmes \cite{kalmes2019power}.

\vspace{0.3cm}

\begin{prop}
Consider the operators $\Delta=\partial_{xx}^2+\partial_{tt}^2$ and $H=\partial_{xx}^2-\partial_t$ on $[0,1]^2$. For each $\varepsilon>0$, there exists a smooth $u$ with $\Delta u \equiv 1$ on $[0,1]^2$ but also satisfying the estimate $|\{x\in[0,1]^2:|u(x)|\leq\varepsilon\}|\geq 1-\varepsilon$. Furthermore, for each $\varepsilon>0$, there exists a smooth $u$ with $Hu \equiv 1$ on $[0,1]^2$ but also $|\{x\in[0,1]^2:|u(x)|\leq\varepsilon\}|\geq 1-\varepsilon$.
\end{prop}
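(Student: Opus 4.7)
The strategy for both parts is the same: find a particular smooth solution $v_0$ to the inhomogeneous equation, and construct a smooth homogeneous solution $h$ (harmonic or caloric, respectively) that approximates $-v_0$ uniformly to within $\varepsilon$ on a compact subset $K \subset [0,1]^2$ of measure at least $1-\varepsilon$. Then $u := v_0 + h$ is smooth, satisfies the inhomogeneous equation on all of $[0,1]^2$ by linearity, and obeys $|u| \le \varepsilon$ on $K$, which immediately yields the claimed sublevel bound.

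For the Laplacian I take $v_0(x,t) = x^2/2$, so that $\Delta v_0 \equiv 1$. The approximating $h$ is produced by a Runge/Mergelyan-type theorem for harmonic functions: if $K \subset \mathbb{R}^2$ is compact with $\mathbb{R}^2 \setminus K$ connected, then every continuous function on $K$ that is harmonic on $K^\circ$ admits uniform approximation by harmonic polynomials. Since $-v_0$ is not harmonic, I arrange $K^\circ = \emptyset$ so that the harmonicity hypothesis becomes vacuous. The three properties required of $K$ --- measure at least $1-\varepsilon$, empty interior, and connected complement --- are jointly realised by a standard slit construction: enumerate a countable dense subset $\{q_n\}$ of $[0,1]^2$ and delete a narrow open channel through each $q_n$ running out to the boundary of $[0,1]^2$, with the $n$-th channel having area at most $\varepsilon/2^{n+1}$. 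Density of $\{q_n\}$ forces the resulting $K$ to have empty interior, connection of every channel to the boundary forces $\mathbb{R}^2 \setminus K$ to be connected, and summability controls the total area removed.

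For the heat operator the argument is structurally identical, with $v_0(x,t) = -t$ (so $Hv_0 \equiv 1$) and with the harmonic approximation theorem replaced by the Runge-type theorem for solutions of the heat equation due to Jones \cite{jones1975approximation}. The topological conditions Jones's theorem places on $K$ differ from those in the elliptic case, reflecting the distinguished role of the time direction --- roughly speaking, one needs an unobstructed route ``upward in $t$'' from every component of the complement --- and the slit construction must be adapted accordingly, for instance by having each channel run upward in $t$ to the top edge $\{t=1\}$ of the square.

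The main obstacle in both parts is the simultaneous geometric balancing of the requirements on $K$: large Lebesgue measure pushes $K$ to be thick, empty interior forces it to be nowhere dense, and the topological constraint on the complement further restricts its global shape. For the Laplacian this is the classical construction underlying the Mergelyan-based counterexample and is essentially routine; the new technical content is in verifying that an analogous slit construction meets the precise hypotheses of Jones's theorem, which is exactly the point where the elliptic and parabolic cases diverge. Once $K$ and the appropriate approximation theorem are in hand, formation of $u = v_0 + h$ and verification of the claimed properties are immediate.
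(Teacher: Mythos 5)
Your Laplacian argument is essentially the original Carbery--Christ--Wright construction, and it is sound: a compact set $K$ with $|K|\geq 1-\varepsilon$, empty interior, and connected complement can be built by a slit construction, and Mergelyan's theorem (applied to real and imaginary parts of a holomorphic approximation, or the equivalent statement for harmonic functions) lets you uniformly approximate the continuous function $-v_0$ on $K$, since the requirement of harmonicity on $K^\circ$ is vacuous when $K^\circ=\emptyset$.

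The heat-operator half of your argument has a genuine gap. Jones's theorem for the heat operator is a Runge-type theorem, not a Mergelyan-type theorem: it says that a function which is already a \emph{temperature on an open set $U\supseteq K$} can be uniformly approximated on $K$ by global temperatures, subject to the stated topological conditions on the complement. It does not say that an arbitrary continuous function on a compact $K$ with empty interior can be so approximated. Your intended approximand $-v_0(x,t)=t$ satisfies $H(t)=-1\neq 0$; it is not a temperature on any open set, so Jones's theorem cannot be applied to it, and the ``vacuous hypothesis'' trick that works for Mergelyan does not transfer. Indeed, the paper explicitly flags that a Mergelyan analogue for the heat operator is not available (``a huge request'') and that this is precisely why one must reformulate the argument around Runge's theorem.

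The paper's proof resolves this by a different choice of $K$ and an intermediate approximant. Instead of a nowhere-dense slit set, $K$ is a union of thin \emph{rectangles}, short in the $t$-direction, whose union has measure $\geq 1-\varepsilon$. On each component of a small open neighbourhood $U$ of $K$ one replaces $v$ by a \emph{locally constant} function $w_1$ (the value of $v$ at one $t$-slice of that rectangle); thinness in $t$ makes $w_1$ close to $v$ on $U$, and local constancy makes $w_1$ a genuine temperature (indeed also harmonic) on $U$. Only then is Runge's theorem applied --- to $w_1$, not to $v$ --- producing a global temperature $w_2$ close to $w_1$ on $K$, and one sets $u=v-w_2$. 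This same Runge-based construction also reproves the Laplacian case without Mergelyan, which is part of the point of the proposition: the two cases are handled by an identical mechanism. To repair your heat-operator argument you would need this extra approximation layer (or some Mergelyan-type theorem for temperatures, which the paper does not assume); your slit construction is also incompatible with this fix, since the complement of the slit set is dense and leaves no room for a locally constant temperature on a neighbourhood of $K$.
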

\begin{proof}
We will prove both statements simultaneously, indicating the differences as they appear.

The Runge theorem for the Laplacian says that for a harmonic function on an open set $U$ containing a compact set $K$ to be uniformly approximated on $K$ by polynomials harmonic on all of $\mathbb{R}^2$, it is enough that the complement of $K$ is connected.

The Runge theorem for the heat operator says that a temperature (a solution to the heat equation) on an open set $U$ containing a compact set $K$ may be uniformly approximated on $K$ by temperatures on all of $\mathbb{R}^{n+1}$ provided that the $t$-slices of the complement of $U$ have no compact component, that is, the sets $\{x\in\mathbb{R}^n:(x,t)\in U^{c}\}$ have no compact component. For us, then, it is clearly sufficient that the $t$-slices of $U$ be intervals.

In each case, our compact set will be a collection of thin rectangles that cover most of the unit square. To be precise, consider some $0<\delta<1/2$ and let $K$ be the union of the disjoint rectangles
$$K:=\bigcup_{i=1}^{\left\lfloor\frac{4-\delta^2}{4\delta+\delta^2}\right\rfloor} \left[\frac{\delta}{4},1-\frac{\delta}{4}\right]\times\left[i\left(\delta+\frac{\delta^2}{4}\right)-\delta,i\left(\delta+\frac{\delta^2}{4}\right)\right]$$
One sees that $\lfloor(4-\delta^2)/(4\delta+\delta^2)\rfloor(\delta+\delta^2/4)\leq 1-(\delta/4)$, and that the rectangles are separated by $\delta^2/4$, so for instance the $\delta^2/16$ neighbourhood of $K$ (say in the supremum norm) is also a disjoint collection of rectangles. This neighbourhood will be our open set $U$.

Consider $v(x,t)=t^2/2$ for the $\Delta$ case, $v(x,t)=-t$ for the $H$ case. Then $\Delta v$, respectively $Hv$, is identically $1$. Now on each of the thin rectangles of $U$, pick the $t$ coordinate of some point in the rectangle - call it $c$ - and define $w_1(x,t)$ on that component to be $c^2/2$ (respectively $-c$). Since $w_1$ is locally constant, it is harmonic (respectively, a temperature) on $U$. It is easily seen that, since the side length along the $t$ axis of each such rectangle is $\delta+(\delta^2/8)$, $w_1$ uniformly approximates $v$ on $U$ to within $\delta+(\delta^2/8)$.

By Runge's theorem for harmonic functions, it is clear that we can approximate $w_1$ uniformly to within $\delta$ on $K$ by a function $w_2$ harmonic on all of $\mathbb{R}^2$. Thus $w_2$ approximates $v$ on $K$ to within $2\delta+(\delta^2/8)$. Hence $u:=v-w_2$ satisfies $\Delta u \equiv 1$ and $|u(x)|\leq 2\delta+(\delta^2/8)$ on $K$. The analogous statement for the $H$ case is true, since the $t$-slices of $U$ are intervals, and so the we can apply the Runge theorem for temperatures. 

It remains to note that $K$ has large measure. Indeed, the measure of $K$ is
\begin{align*}
\delta\left(1-\frac{\delta}{2}\right)\left\lfloor\frac{4-\delta^2}{4\delta+\delta^2}\right\rfloor&\geq\delta\left(1-\frac{\delta}{2}\right)\left(\frac{4-\delta^2}{4\delta+\delta^2}-1\right)\\
&=\frac{\delta^3+3\delta^2-14\delta+8}{8+2\delta}>\frac{8-14\delta}{8+2\delta}=1-\frac{16\delta}{8+2\delta}\\
&>1-2\delta
\end{align*}
Taking $2\delta+(\delta^2/8)\leq\varepsilon$ completes the proof.
\end{proof}
\section{Proofs of the main results}\label{proofs}
Our proofs will rely on certain results on growth rates of appropriate families of averages, both of which are consequences of some elementary calculations, but they are not always explicitly stated in the literature. For the sake of completeness we produce them here in a way that emphasises their applicability. In the following proofs $u$ will denote a smooth function on a bounded open set $\Omega$.
\subsection{Proof of Theorem \ref{LapThm}}
The derivative formula for the Laplacian is entirely routine and well-known. 

\vspace{0.3cm}

\begin{prop}
Consider for $\overline{B_R(x)}\subseteq\Omega$ the function $\phi:[0,R]\rightarrow\mathbb{R}$ given by $\phi(0)=u(x)$ and the average
$$\frac{1}{|B_r|}\int_{B_r(x)}u(y)\,dy$$
for $0<r\leq R$. Clearly $\phi$ is continuous on $[0,R]$, and on the open interval $(0,R)$ we have
\begin{equation}\label{deriv1}
\phi'(r)=\frac{1}{|B_r|}\int_{B_r(x)}\frac{r^2-|x-y|^2}{2r}\Delta u(y)\,dy.
\end{equation}
\end{prop}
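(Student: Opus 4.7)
The plan is to verify the formula by a direct differentiation-under-the-integral-sign argument, followed by an integration by parts with a carefully chosen auxiliary function.

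First I would normalise the domain of integration by substituting $y = x + rz$, so that
$$\phi(r) = \frac{1}{|B_1|}\int_{B_1} u(x+rz)\,dz.$$
Since $u$ is smooth and $\overline{B_R(x)}\subseteq\Omega$, I can differentiate under the integral sign for $r\in(0,R)$ to obtain
$$\phi'(r) = \frac{1}{|B_1|}\int_{B_1} z\cdot\nabla u(x+rz)\,dz.$$
Changing variables back with $z = (y-x)/r$ gives
$$\phi'(r) = \frac{1}{r\,|B_r|}\int_{B_r(x)} (y-x)\cdot\nabla u(y)\,dy.$$

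The key observation is now to recognise the vector $y-x$ as the (negative) gradient of a function that vanishes on $\partial B_r(x)$. Setting $\psi(y) := (r^2 - |y-x|^2)/2$, we have $\nabla\psi(y) = -(y-x)$ and $\psi \equiv 0$ on $\partial B_r(x)$. Green's identity on $B_r(x)$ therefore yields
$$\int_{B_r(x)} (y-x)\cdot\nabla u(y)\,dy \;=\; -\int_{B_r(x)}\nabla\psi\cdot\nabla u\,dy \;=\; \int_{B_r(x)} \psi\,\Delta u\,dy,$$
since the boundary term $\int_{\partial B_r(x)}\psi\,\partial_\nu u\,dS$ vanishes. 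Substituting this back gives exactly the claimed formula (\ref{deriv1}).

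Continuity of $\phi$ at $r=0$ is then immediate: by the continuity of $u$ at $x$, the averages over $B_r(x)$ converge to $u(x)$ as $r\to 0^+$, matching the defined value $\phi(0) = u(x)$. There is no real obstacle in this proof; the only point that requires a small moment of thought is guessing the correct auxiliary potential $\psi$, but it is essentially forced on us by the requirements that $\nabla\psi = -(y-x)$ and that $\psi$ vanish on the sphere, which together determine $\psi$ up to constants.
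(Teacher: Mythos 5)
Your proof is correct, and after the common first step (normalising to the unit ball, differentiating under the integral, and changing variables back to arrive at $\phi'(r)=\frac{1}{r|B_r|}\int_{B_r(x)}(y-x)\cdot\nabla u(y)\,dy$) it takes a genuinely different and rather cleaner route than the paper's. The paper passes to polar coordinates, applies the divergence theorem on each sphere $\partial B_s(x)$ to replace the flux of $\nabla u$ by $\int_{B_s(x)}\Delta u$, then uses polar coordinates a second time together with Fubini on the triangular region $\{0\leq t\leq s\leq r\}$ to assemble the kernel $\frac{r^2-t^2}{2r}$ from the explicit integral $\int_t^r \frac{s}{r}\,ds$. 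You instead spot that $y-x=-\nabla\psi$ for the auxiliary potential $\psi(y)=\tfrac{1}{2}(r^2-|y-x|^2)$, which vanishes on $\partial B_r(x)$, and then a single application of Green's first identity (with the boundary term killed by $\psi\!\mid_{\partial B_r}=0$) converts $\int(y-x)\cdot\nabla u$ directly into $\int\psi\,\Delta u$. Your approach is shorter and makes the origin of the kernel transparent; the paper's layered argument is more pedestrian but arguably more mechanical, requiring no guess of an auxiliary function — though as you note, $\psi$ is essentially forced by the two constraints $\nabla\psi=-(y-x)$ and $\psi=0$ on the sphere. Both are valid proofs of the proposition.
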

Here $B_r(x)$ denotes the Euclidean ball of radius $r$ centred at $x$, and $|B_r|$ is the Lebesgue measure of the ball.
\begin{proof}
Using the change of variables $y=x-r\tilde{y}$ we have
$$\phi(r)=\frac{1}{|B_1|}\int_{B_1(0)}u(x-r\tilde{y})\,d\tilde{y}.$$
Differentiating under the integral and changing variables again we have 
\begin{align*}
\phi'(r)&=\frac{1}{|B_1|}\int_{ B_1(0)}\nabla u(x-r\tilde{y})\cdot (-\tilde{y})\,d\tilde{y}\\
&=\frac{1}{|B_r|}\int_{ B_r(0)}\nabla u(y)\cdot \frac{y-x}{r}\,dy.
\end{align*}
Now using polar coordinates, with $\sigma_s$ being the induced surface measure on the sphere of radius $s$, we have
\begin{align*}
\phi'(r)&=\frac{1}{|B_r|}\int_0^r\int_{\partial B_r(x)}\nabla u(y)\cdot \frac{y-x}{r}\,d\sigma_s(y)\,ds\\
&=\frac{1}{|B_r|}\int_0^r\frac{s}{r}\int_{\partial B_s(x)}\nabla u(y)\cdot \frac{y-x}{s}\,d\sigma_s(y)\,ds\\
&=\frac{1}{|B_r|}\int_0^r\frac{s}{r}\int_{B_s(x)}\Delta u(y)\,dy\,ds
\end{align*}
where in the last step we used Gauss' Divergence Theorem. We again apply polar coordinates to the inner integral and apply Fubini's Theorem.
\begin{align*}
\phi'(r)&=\frac{1}{|B_r|}\int_0^r\frac{s}{r}\int_0^s\int_{\partial B_t(x)}\Delta u(y)\,d\sigma_t(y)\,dt\,ds\\
&=\frac{1}{|B_r|}\int_0^r\int_t^r\frac{s}{r}\int_{\partial B_t(x)}\Delta u(y)\,d\sigma_t(y)\,ds\,dt\\
&=\frac{1}{|B_r|}\int_0^r\int_{\partial B_t(x)}\frac{r^2-t^2}{2r}\Delta u(y)\,d\sigma_t(y)\,dt.
\end{align*}
Noting $t=|x-y|$ on $\partial B_t(x)$ completes the calculation.
\end{proof}
\begin{proof}[Proof of Theorem \ref{LapThm}]
We shall proceed by contradiction. Suppose that given a positive $c$ we can find $u$ with $\Delta u\geq 1$ so that
\begin{equation}\label{ineq2}
\|u\|_{L^p(\Omega)}\cdot|\{x\in\Omega:|u(x)|\geq c\}|^{1/p'}\leq c
\end{equation}
holds. For $c$ sufficiently small (we shall determine precisely how during the proof), we will obtain a contradiction.

Let $\Omega_\delta$ denote the set of those $x\in\Omega$ that are at least some fixed distance $\delta$ from the boundary. For each such $x$ we can apply mean value inequality for averages over balls $B_{\delta/2}(x)$.

The key observation is that inequality (\ref{ineq2}) provides control on the ``bad part" of each of the averages. In particular, denoting $\{x:|u(x)|\leq c\}$ by $U_c$, we have
\begin{align*}
\frac{1}{|B_{\delta/2}|}\int_{B_{\delta/2}(x)}u(y)dy&=\frac{1}{|B_{\delta/2}|}\left(\int_{B_{\delta/2}(x)\cap U_c}u(y)dy+\int_{B_{\delta/2}(x)\cap (U_c)^c}u(y)dy\right)\\
&\leq (1+|B_{\delta/2}|^{-1})c
\end{align*}
where we have estimated the second integral by H\"older's inequality and the assumed inequality.

Consider now the derivative formula (\ref{deriv1}). We shall denote by $|\partial B_s|=|\partial B_1|s^{n-1}$ the total surface measure of the sphere of radius $s$, and using the assumption $\Delta u\geq 1$, we have that the derivative is bounded below by
\begin{align*}
\frac{1}{|B_r|}\int_0^r|\partial B_1|s^{n-1}\frac{r^2-s^2}{2r}\,ds&=\frac{|\partial B_1|}{|B_r|}\left(\frac{r^{n+2}}{2rn}-\frac{r^{n+2}}{2r(n+2)}\right)\\
&=\frac{|\partial B_1|}{|B_1|}\left(\frac{1}{2n}-\frac{1}{2(n+2)}\right)r=:C_nr.
\end{align*}
Now whenever $x\in\Omega_\delta$, we have by the fundamental theorem of calculus that $u(x)=\phi(0)=\phi(\delta/2)-\int_0^{\delta/2}\phi'(r)\,dr$, which along with $\Delta u\geq 1$ gives
\begin{align*}
u(x)&=\frac{1}{|B_\delta/2|}\int_{B_{\delta/2}(x)}u(y)\,dy-\int_0^{\delta/2}\frac{1}{|B_r|}\int_{B_r(x)}\frac{r^2-|x-y|^2}{2r}\Delta u(y)\,dy\,dr\\
&\leq (1+|B_{\delta/2}|^{-1})c-\int_0^{\delta/2}C_nr\leq (1+|B_{\delta/2}|^{-1})c-(C_n/8)\delta^2.
\end{align*}
It follows that whenever
$$c\leq (1+|B_{\delta/2}|^{-1})^{-1}\frac{C_n\delta^2}{16}$$
we have that $u(x)\leq -(C_n/16)\delta^2$ for each $x\in\Omega_\delta$. Note in particular that $c\leq(C_n/16)\delta^2$.

Now we have that
$$\|u\|_{L^p(\Omega)}\geq\|u\|_{L^p(\Omega_\delta)}\geq (C_n/16)\delta^2|\Omega_\delta|^{1/p}$$
and therefore
\begin{align*}
&\quad\,\,\|u\|_{L^p(\Omega)}\cdot|\{x\in\Omega:|u(x)|\geq c\}|^{1/p'}\\
&\geq\|u\|_{L^p(\Omega)}\cdot|\{x\in\Omega:|u(x)|\geq(C_n/16)\delta^2\}|^{1/p'}\\
&\geq\|u\|_{L^p(\Omega)}\cdot|\Omega_\delta|^{1/p'}\\
&\geq(C_n/16)\delta^2|\Omega_\delta|^{1/p}\cdot|\Omega_\delta|^{1/p'}\\
&=(C_n/16)\delta^2|\Omega_\delta|.
\end{align*}
Thus if we know further that $c<(C_n/16)\delta^2|\Omega_\delta|$, we would arrive at a contradiction. So if we choose $\delta$ so that $\Omega_\delta$ has positive measure, the desired inequality holds when $c<\min\{(C_n/16)\delta^2|\Omega_\delta|,(1+|B_{\delta/2}|^{-1})^{-1}(C_n/16)\delta^2\}$. It is clear that this choice is independent of $p$.
\end{proof}

\subsection{Proof of Theorem \ref{HeatThm}}
For the heat operator case, we shall first define some notation. Let $\Phi$ be the standard heat kernel
$$\Phi(x,t):=\frac{1}{(4\pi t)^{n/2}}e^{-|x|^2/4t}.$$
The heatball centred at $(x,t)$ of radius $r>0$ is the compact set
$$E(x,t;r):=\{(x,t)\}\cup\{(y,s)\in\mathbb{R}^{n+1}:s\leq t,\,\Phi(t-s,x-y)\geq 1/r^n\}.$$
Note that except for at $(x,t)$, $\Phi(t-s,x-y)=1/r^n$ on the boundary.

For convenience we shall also use $E(r)$ to denote $\{(x,t):\Phi(x,t)\geq 1/r^n\}$. Notice that $|E(x,t;r)|=|E(r)|=r^{n+2}|E(1)|$, a fact that follows from the parabolic scaling $(y,s)\mapsto (ry,r^2s)$ taking $E(1)$ to $E(r)$.

Whenever the heatball $E(x,t;R)$ is in $\Omega$, we shall consider the functions $\phi:[0,R]\rightarrow \mathbb{R}$ given by $\phi(0)=u(x,t)$ and
$$\phi(r)=\frac{1}{4r^n}\int_{E(x,t;r)}u(y,s)\frac{|x-y|^2}{(t-s)^2}\,dy\,ds$$
for $0<r\leq R$. These averages were considered in a paper of Watson \cite{watson1973theory}, in which a theory of subtemperatures, analogous to the theory of subharmonic functions, was developed.

In the paper of Watson \cite{watson1973theory}, it is seen that
$$\frac{1}{4r^n}\int_{E(x,t;r)}\frac{|x-y|^2}{(t-s)^2}\,dy\,ds=1$$
which is where the normalisation in the above definition comes from. However, as the precise normalisation will not be important for us, and in order to give a self-contained treatment in this paper, the reader may wish instead to think of the factor of $4r^n$ being written as
$$V(r):=\int_{E(r)}\frac{|y|^2}{s^2}\,dy\,ds=r^nV(1).$$
The equality $V(r)=r^nV(1)$ follows from parabolic scaling. That this quantity is finite can be seen quite easily in higher dimensions, and the techniques we will use are suggestive of methods used to account for unboundedness of the kernel of the averages, $|x-y|^2/(V(r)(t-s)^2)$. In fact, we will only need this result in higher dimensions.

By rearranging the formula defining the level set in $E(r)$, we can give the equivalent expression $\{(y,s):0<s\leq r^2/4\pi,|y|\leq(2ns\log(r^2/4\pi s))^{1/2}\}$. The integral becomes
\begin{align*}
V(r)&=\int_0^{r^2/4\pi}\int_{|y|\leq(2ns\log(r^2/4\pi s))^{1/2}}\frac{|y|^2}{s^2}\,dy\,ds\\
&=\int_0^{r^2/4\pi}\int_{|y|\leq 1}\frac{|y|^2}{s^2}(2ns\log(r^2/4\pi s))^{(n+2)/2}\,dy\,ds\\
&=\int_{|y|\leq 1}|y|^2\,dy\,\int_0^{r^2/4\pi}\left(2ns^{1-\frac{4}{n+2}}\log(r^2/4\pi s)\right)^{(n+2)/2}\,ds.
\end{align*}
The $y$ integral is clearly finite and easy to compute. For $n\geq 3$, the integrand in the $s$ integral is bounded, extending continuously to $s=0$ - indeed, it is clear that $s^{1-\frac{4}{n+2}}\log(r^2/4\pi s)$ tends to $0$ as $s\rightarrow 0$.

Now, we have already observed that the kernel for these averages is unbounded, which means that the first step in the proof of Theorem \ref{LapThm} fails here. Nevertheless, we will later consider a modified kernel which is bounded for fixed $r$, and hence the associated measure can be controlled by a multiple of the Lebesgue measure, so that the argument works. The approach will be to add in some extra spatial variables, apply the derivative formula in higher dimensions, but integrate out the extra variables appearing in the kernel to get a new, bounded kernel in lower dimensions, in a manner not unlike the above calculation. 

First, however, we shall give a proof of the derivative formula for heatballs. This proof is due to Evans \cite{evans10}, but it is not explicitly given there, instead being absorbed into the proof of the corresponding mean value formula for the heat equation. 

\vspace{0.3cm}

\begin{prop}
The function $\phi$ defined above is continuous on $[0,R]$, and in the interval $(0,R)$ its derivative is given by
\begin{equation}\label{deriv2}
\phi'(r)=\frac{n}{r^{n+1}}\int_{E(x,t;r)}Hu(y,s)\log(r^n\Phi(x-y,t-s))\,dy\,ds.
\end{equation}
\end{prop}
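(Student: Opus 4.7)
The plan is to follow the proof due to Evans, which closely parallels the derivation of (\ref{deriv1}) for the Laplacian: change variables to freeze the domain of integration, differentiate under the integral sign, revert variables, and then integrate by parts repeatedly, exploiting the fact that $\log(r^n\Phi(x-y,t-s))$ vanishes on $\partial E(x,t;r)$. Continuity of $\phi$ at $r=0$ follows from the normalisation $V(1)=4$ recorded in the excerpt together with continuity of $u$.

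First, the parabolic scaling $y=x+r\tilde y$, $s=t-r^2\tilde s$ transforms $E(x,t;r)$ into the fixed unit heatball $E(0,0;1)$ and gives
$$\phi(r)=\frac{1}{4}\int_{E(0,0;1)}u(x+r\tilde y,t-r^2\tilde s)\frac{|\tilde y|^2}{\tilde s^2}\,d\tilde y\,d\tilde s.$$
Differentiating in $r$ (justified by smoothness of $u$ and integrability of $|\tilde y|^2/\tilde s^2$ on $E(0,0;1)$, at least for $n\geq 3$) and then returning to the original variables produces
$$\phi'(r)=\frac{1}{4r^{n+1}}\int_{E(x,t;r)}\left[(y-x)\cdot\nabla_y u-2(t-s)\partial_s u\right]\frac{|y-x|^2}{(t-s)^2}\,dy\,ds.$$

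I would then introduce $\psi(y,s):=\log(r^n\Phi(x-y,t-s))$, which by construction vanishes on $\partial E(x,t;r)$. A direct calculation from the definition of $\Phi$ yields
$$\nabla_y\psi=-\frac{y-x}{2(t-s)},\qquad\partial_s\psi=\frac{n}{2(t-s)}-\frac{|y-x|^2}{4(t-s)^2},$$
so in particular $|y-x|^2/(t-s)^2=2n/(t-s)-4\partial_s\psi$. Substituting this identity into the integrand above splits the $(y-x)\cdot\nabla_y u$ term in two. In the $n/(t-s)$ piece one uses $(y-x)/(t-s)=-2\nabla_y\psi$ to rewrite the integrand as $-2n\,\nabla_y u\cdot\nabla_y\psi$, and a single spatial integration by parts (with boundary term vanishing because $\psi\equiv 0$ there) produces $\frac{n}{r^{n+1}}\int\psi\,\Delta_y u\,dy\,ds$.

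The remaining piece $-\frac{1}{r^{n+1}}\int(y-x)\cdot\nabla_y u\,\partial_s\psi\,dy\,ds$ I would integrate by parts first in $s$ (using that $\psi=0$ at the endpoints of each $s$-slice of $E(x,t;r)$) and then in $y$, exploiting the identity $\nabla_y\cdot((y-x)\psi)=n\psi-|y-x|^2/(2(t-s))$. The result splits as $-\frac{n}{r^{n+1}}\int\psi\,\partial_s u\,dy\,ds$ plus a term that exactly cancels the original $-2(t-s)\partial_s u$ contribution. Combining everything leaves $\phi'(r)=\frac{n}{r^{n+1}}\int\psi\,(\Delta_y u-\partial_s u)\,dy\,ds$, which is the claimed identity since $Hu=\Delta_y u-\partial_s u$. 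The main obstacle is simply the sign bookkeeping and the correct sequencing of the integrations by parts: the cancellation of the two $\partial_s u$ contributions is forced by the precise relationship between $\partial_s\psi$ and the original kernel, and it is exactly this cancellation that brings the heat operator $H=\Delta_x-\partial_t$ into view.
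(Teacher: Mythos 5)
Your proposal is correct and follows essentially the same route as the paper's proof (Evans's argument): parabolic rescaling to freeze the domain, differentiation under the integral, and repeated integration by parts against $\psi=\log(r^n\Phi(x-\cdot,t-\cdot))$, which vanishes on $\partial E(x,t;r)$, with the cancellation forcing the appearance of $H$. The only difference is organizational: the paper begins by rewriting the $u_t$ term using $\psi_{x_i}=-y_i/2s$ and integrates by parts spatially, then temporally, whereas you begin with the $(y-x)\cdot\nabla_y u$ term via the kernel split $|y-x|^2/(t-s)^2=2n/(t-s)-4\partial_s\psi$ and integrate by parts in $s$ before $y$ for the $\partial_s\psi$ piece; the underlying identities, boundary-term vanishing, and the three integrations by parts are the same, so this is a bookkeeping variant rather than a different proof.
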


\begin{proof}
Using the translation and rescaling $y=x-r\tilde{y},s=t-r^2\tilde{s}$, we have
$$\phi(r)=\frac{1}{4}\int_{E(1)}u(x-r\tilde{y},t-r^2\tilde{s})\frac{|\tilde{y}|^2}{\tilde{s}^2}\,d\tilde{y}\,d\tilde{s}.$$
Continuity of $\phi$ is obvious from the smoothness of $u$ in a neighbourhood of $E(x,t;r)$, and noting that the normalisation $V(1)=4$ in these averages is the correct one. We can differentiate under the integral to obtain
$$\phi'(r)=-\frac{1}{4}\int_{E(1)}\left(2rsu_t+\sum_{i=1}^nu_{x_i}y_i\right)\frac{|y|^2}{s^2}\,dy\,ds$$
suppressing the argument $(x-ry,t-r^2s)$ of $u_{x_i}$ and $u_t$. Scaling back we have
$$\phi'(r)=-\frac{1}{4r^{n+1}}\int_{E(r)}\left(2su_t+\sum_{i=1}^nu_{x_i}y_i\right)\frac{|y|^2}{s^2}\,dy\,ds$$
with the argument $(x-y,t-s)$ suppressed. For convenience let us denote $\psi(y,s)=\log(r^n\Phi(y,s))=n\log(r)-(n/2)\log(4\pi s)-|y|^2/4s$. We have $\psi_{x_i}(y,s)=-y_i/2s$ and thus
$$\int_{E(r)}2su_t\frac{|y|^2}{s^2}\,dy\,ds=-4\int_{E(r)}u_t\sum_{i=1}^n\psi_{x_i}(y,s)y_i\,dy\,ds.$$
Noting that $\psi(y,s)=0$ on the boundary of $E(r)$, we can use integration by parts $\psi_{x_i}$ and $y_iu(x-y,t-s)$ to get that this equals
$$4\int_{E(r)}\psi\sum_{i=1}^n(u_t-y_iu_{tx_i})\,dy\,ds=4\int_{E(r)}\psi\left(nu_t-\sum_{i=1}^ny_iu_{tx_i}\right)\,dy\,ds.$$
Focusing on the second term, we have
\begin{align*}
&-4\int_{E(r)}\sum_{i=1}^n\psi(y,s)y_i\frac{\partial}{\partial s}(-u_{x_i}(x-y,t-s))\,dy\,ds\\
=\,&-4\int_{E(r)}\sum_{i=1}^n\psi_t(y,s)u_{x_i}(x-y,t-s)y_i\,dy\,ds\\
=\,&-4\int_{E(r)}\sum_{i=1}^n\left(\frac{|y|^2}{4s^2}-\frac{n}{2s}\right)u_{x_i}(x-y,t-s)y_i\,dy\,ds
\end{align*}
and hence
$$\int_{E(r)}2su_t\frac{|y|^2}{s^2}\,dy\,ds=4\int_{E(r)}\left[nu_t\psi-\sum_{i=1}^n\left(\frac{|y|^2}{4s^2}-\frac{n}{2s}\right)u_{x_i}y_i\right]dy\,ds.$$
All together this gives
\begin{align*}
\phi'(r)&=\frac{-1}{4r^{n+1}}\int_{E(r)}\left[4nu_t\psi-4\sum_{i=1}^n\left(\frac{|y|^2}{4s^2}-\frac{n}{2s}\right)u_{x_i}y_i+\sum_{i=1}^nu_{x_i}y_i\frac{|y|^2}{s^2}\right]dy\,ds\\
&=\frac{-1}{4r^{n+1}}\int_{E(r)}\left[4nu_t\psi-4\sum_{i=1}^n\frac{-n}{2s}u_{x_i}y_i\right]dy\,ds\\
&=\frac{n}{r^{n+1}}\int_{E(r)}\left[\left(\sum_{i=1}^n\frac{-y_i}{2s}u_{x_i}(x-y,t-s)\right)-u_t(x-y,t-s)\psi\right]dy\,ds.
\end{align*}
Since $\psi_{x_i}(y,s)=-y_i/2s$ and $\psi$ is $0$ on the boundary of $E(r)$, an integration by parts in $y_i$ for each term of the sum yields
\begin{align*}
\phi'(r)&=\frac{n}{r^{n+1}}\int_{E(r)}\Delta u(x-y,t-s)-u_t(x-y,t-s)\psi\,dy\,ds\\
&=\frac{n}{r^{n+1}}\int_{E(r)}Hu(x-y,t-s)\psi(y,s)\,dy\,ds\\
&=\frac{n}{r^{n+1}}\int_{E(x,t;r)}Hu(y,s)\log(r^n\Phi(x-y,t-s))\,dy\,ds
\end{align*}
as desired.
\end{proof}

The last ingredient needed to carry out the proof is the averages over the so-called ``modified heatballs". This idea was used by Kuptsov \cite{kuptsov1981mean}, our treatment follows a paper by Watson \cite{watson2002elementary} giving a review of the main ideas and some further results. Conveniently, we won't need anything more than the absolute basics, which we summarise here.

Starting with a function $u$ on an open subset $\Omega$ of $\mathbb{R}^{n+1}$, we examine the above averages $\phi$ for $u$ being considered as a function on $\mathbb{R}^m\times\Omega$, by considering a function $\tilde{u}$ that is independent of the first $m$ variables - that is, for $\xi\in\mathbb{R}^m$, $(x,t)\in\Omega$, set $\tilde{u}(\xi,x,t)=u(x,t)$ and consider the averages $\phi$ for $\tilde{u}$, which clearly take the form
$$\phi(r)=\frac{1}{4r^{m+n}}\int_{E(\xi,x,t;r)}\frac{|x-y|^2+|\xi-\eta|^2}{(t-s)^2}u(y,s)\,d\eta\,dy\,ds.$$
Since $u$ does not dependent on $\eta$, we can carry out the integration in $\eta$. As above, we rearrange the superlevel set formula defining the heatball to observe that $E(\xi,x,t;r)$ is the set of $(y,\eta,s)$ satisfying
$$0\leq t-s\leq r^2/4\pi,|x-y|^2+|\xi-\eta|^2\leq 2(m+n)(t-s)\log(r^2/4\pi(t-s)).$$
The $(n,m)$-modified heatball is the projection of $E(\xi,x,t;r)$ onto the last $n+1$ coordinates, denoted $E_m(x,t;r)$, and by carrying out the integration in $\eta$ we obtain a new kernel on $E_m(x,t;r)$ that will be bounded for large enough $m$, as we shall demonstrate.

For fixed $y$, $s$, we must integrate over $|\xi-\eta|\leq A=A(x-y,t-s)$, given by
$$A(x-y,t-s):=\left(2(t-s)(m+n)\log\left(\frac{r^2}{4\pi(t-s)}\right)-|x-y|^2\right)^{1/2}.$$
We compute the integral in polar coordinates
\begin{align*}
\int_{|\xi-\eta|\leq A}\frac{|\xi-\eta|^2+|x-y|^2}{(t-s)^2}\,d\eta&=|\partial B_1|\int_0^A\frac{r^2+|x-y|^2}{(t-s)^2}r^{m-1}\,dr\\
&=\frac{|\partial B_1|}{(t-s)^2}\left(\frac{A^{m+2}}{m+2}+|x-y|^2\frac{A^m}{m}\right)\\
&=\frac{|B_1|}{(t-s)^2}A^m\left(\frac{mA^2}{m+2}+|x-y|^2\right)
\end{align*}
where we used the basic formula $|\partial B_1|=m|B_1|$. Substituting the value of $A^2$ in the brackets gives
$$\frac{2|B_1|}{m+2}A^m\left(\frac{m(m+n)}{t-s}\log\left(\frac{r^2}{4\pi(t-s)}\right)+\frac{|x-y|^2}{(t-s)^2}\right).$$
Dividing this by the normalisation $1/4r^{m+n}$ gives the kernel $K_r(x-y,t-s)$ (a non-negative density) of the average over the $(n,m)$ modified heatball $E_m(x,t;r)$. That is to say,
$$\phi(r)=\int_{E_m(x,t;r)}K_r(x-y,t-s)u(y,s)\,dy\,ds.$$
For $m\geq 3$, we can show that this kernel is bounded with bounds depending only on $n,m$ and $r$. Indeed, we can immediately replace $x-y$ with $y$ and $t-s$ with $s$ via a change of variables, and consider bounding $K_r(y,s)$ on the set $\{(y,s):\Phi(0,y,s)\geq 1/r^{m+n}\}$ (this is the reflection in the final coordinate of $E_m(0,0;r)$), where here $\Phi$ is the heat kernel in $m+n$ spatial dimensions.

We just need to check that the kernel is bounded near $(0,0)$. Observing that $A^m$ is bounded by $(2s(m+n)\log(r^2/4\pi s))^{m/2}$, we see that since in (the reflection of) the modified heatball we have $|y|^2\leq 2s(m+n)\log(r^2/4\pi s)$, the kernel can be bounded by a constant depending on $n,m$ and $r$ multiplied by
$$s^{(m-2)/2}\log\left(\frac{r^2}{4\pi s}\right)^{(m+2)/2}.$$
Similarly to before, we can easily see that as $s$ goes to $0$, provided $m\geq 3$, this quantity goes to $0$, hence the kernel extends continuously to $(0,0)$ and is thus bounded above on the modified heatball. 
\begin{proof}[Proof of Theorem \ref{HeatThm}]
In view of the above, the proof proceeds almost identically to Theorem \ref{LapThm}, with some small differences that we can easily address.

Suppose that given a positive $c$, we can always find $u$ with $Hu\geq 1$ so that
\begin{equation}\label{ineq3}
\|u\|_{L^p(\Omega)}\cdot|\{x\in\Omega:|u(x)|\geq c\}|^{1/p'}\leq c.
\end{equation}
Fix $m\geq 3$. Let $R$ be fixed so that the set $\Omega_R$, consisting of those points $(x,t)$ for which the modified heatball $E_m(x,t;R)$ is contained in $\Omega$, has positive Lebesgue measure. Let $M_R$ be the maximum value attained by the kernel $K_R$. Then for each $(x,t)\in\Omega_R$, by splitting the integral on the right over the sets where $|u|\leq c$ and $|u|\geq c$, applying H\"older's inequality and the inequality (\ref{ineq3}) to the latter, we obtain
\begin{align*}
\int_{E_m(x,t;R)}K_R(x-y,t-s)u(y,s)\,dy\,ds&\leq M_R\int_{E_m(x,t;R)}|u(y,s)|\,dy\,ds\\
&\leq M_R(|E_m(x,t;R)|+1)c=:C_Rc.
\end{align*}
Note that the extended function $\tilde{u}(\xi,x,t)=u(x,t)$ considered in the proceeding discussion has $H\tilde{u}(\xi,x,t)=Hu(x,t)\geq 1$. By the fundamental theorem of calculus for $\phi$ and the derivative formula (\ref{deriv2}) applied on the $(m+n)$-dimensional heatball, we have
\begin{align*}
u(x,t)&=\tilde{u}(\xi,x,t)=\phi(0)=\phi(R)-\int_0^R\phi'(r)\,dr\\
&=\int_{E_m(x,t;R)}K_R(x-y,t-s)u(y,s)\,dy\,ds\\
&-\int_0^R\frac{1}{4r^{m+n+1}}\int_{E(\xi,x,t;r)}v_{\xi,x,t,r}(\eta,y,s)\,d\eta\,dy\,ds\,dr
\end{align*}
where $v_{\xi,x,t,r}$ is shorthand for $H\tilde{u}(\eta,y,s)\log(r^{m+n}\Phi(\xi-\eta,x-y,t-s))$.
Note that $\log(r^{m+n}\Phi(\xi-\eta,x-y,t-s))$ is positive on the heatball $E(\xi,x,t;r)$, and is at least $m+n$ on the heatball $E(\xi,x,t;r/e)$. Hence we have the bound
$$\int_{E(\xi,x,t;r)}v_{\xi,x,t,r}(\eta,y,s)\,d\eta\,dy\,ds\geq (m+n)|E(\xi,x,t;r/e)|.$$
The right hand side is equal to $(m+n)|E(\xi,x,t;1)|(r/e)^{m+n+2}=C_{m,n}r^{m+n+2}$. Using this bound with the bound $\phi(R)\leq C_Rc$, it follows that for $(x,t)\in\Omega_R$,
$$u(x,t)\leq C_Rc-(C_{m,n}/8)R^2.$$
The remainder of the proof features no major differences to the proof of Theorem \ref{LapThm}. If we take $c\leq C_R^{-1}(C_{m,n}/16)R^2$, then $u(x,t)\leq -(C_{m,n}/16)R^2$. If also $c\leq (C_{m,n}/16)R^2$, then
$$\|u\|_{L^p(\Omega)}\cdot|\{x\in\Omega:|u(x)|\geq c\}|^{1/p'}\geq (C_{m,n}/16)R^2|\Omega_R|$$
and the contradiction follows if we suppose further that $c$ is also less than $(C_{m,n}/16)R^2|\Omega_R|$.
\end{proof}

\section{Remarks and questions}\label{ques}
In light of earlier discussion, it seems of interest to determine the generality in which uniformly balancing sublevel inequalities hold. We pose the following question.

\textbf{Question.} Given a differential operator $D$ on an open set $\Omega$, we are interested in whether there exist constants $c,\alpha>0$ and $1\leq p\leq \infty$ such that an inequality of the form
$$\|u\|_{L^p(\Omega)}\cdot|\{x\in\Omega:|u(x)|\geq c\}|^\alpha\geq c$$
holds whenever $Du\geq A>0$ in $\Omega$. Can we classify which differential operators possess such inequalities? In particular, could it be that a uniformly balancing sublevel inequality holds for all linear differential operators?

In order to motivate the search for a linear counterexample, let us reflect on the above proofs and examine what generality they are likely to extend to. The key element of the proofs was the relation of a rate of change for a parameterised family of averages to the differential operator in question, either by means of a derivative or just a nicely-quantified difference. Such parameterised families of averages occur in the study of mean value formulae for PDE, where often one seeks to establish that such a family of averages is constant if and only if a function satisfies a certain PDE or family of PDEs. In fact, if we allow for averages against measures that are not necessarily positive, a vast number of linear PDE solutions can be characterised this way. Pokrovskii \cite{pokrovskii1998mean} establishes a method for constructing such measures, generalising the ideas of Zalcman \cite{zalcman1973mean}.

We need our averages to be with respect to a measure which can at least be locally bounded by some power of the Lebesgue measure, as is constructed by Pokrovskii \cite{pokrovskii1998mean}, if we are to find straightforward extensions to the above proofs. As we saw in the proofs above, the contrapositive of the statement allowed us to bound averages against the Lebesgue measure over a fixed set in a uniform way, which is why we need to compare to the Lebesgue measure, typically by means of a bounded density as for the heat operator.

The main difficulty seems to be in producing a formula expressing the growth of averages in terms of the differential operator, integrated against a positive measure. This property perhaps holds only for certain naturally arising monotone families of averages, which seems to be related to those differential operators having good theories of subsolutions and supersolutions where we have access to maximum principles, for instance. Indeed, our arguments were based on the idea that for such operators, inequalities of the form $Du\geq 1$ represent a stronger property than that of a usual subsolution, where by comparison with a constant solution we would expect a quantifiably large deviation.

In view of this, it seems reasonable to expect that for sufficiently nice elliptic and parabolic operators, where we indeed have a developed theory for subsolutions and supersolutions, modifications of the above arguments will prove fruitful.

This suggests another interesting avenue to pursue. Consider the wave operator $\partial^2_t-\Delta_x$. In one spatial dimension, the wave operator can be expressed as $\partial^2_t-\partial^2_x=(\partial_t-\partial_x)(\partial_t+\partial_x)$, which by the simple change of co-ordinates $x'=t-x, y'=t+x$ is the same as $\partial_{x'}\partial_{y'}$, and so this in fact satisfies a uniform sublevel set estimate. However, in more than one spatial dimension, simply by considering functions that are constant in $t$, we see that no uniform sublevel set estimate holds due to the failure for the Laplacian. However, the wave operator is neither elliptic nor parabolic, which suggests that the methods of this paper may not be helpful in establishing a uniformly balancing sublevel inequality.

Nevertheless, this is not to suggest that a uniformly balancing sublevel inequality fails to hold for the wave operator, but that the wave operator seems to be worthy of further inspection.

\textit{Acknowledgements.} The author is supported by a UK EPSRC scholarship at the Maxwell Institute Graduate School. The author would like to thank Prof. James Wright for many helpful discussions, along with various suggestions and improvements; in particular the inclusion of the discussion on failure of uniform sublevel set estimates via Runge-type theorems at the end of section \ref{disc}.

\bibliographystyle{abbrv}
\bibliography{Bibliography1}

John Green,\\ Maxwell Institute of Mathematical Sciences and the School of Mathematics,\\ University of Edinburgh,\\ JCMB, The King’s Buildings,\\ Peter Guthrie Tait Road,\\ Edinburgh, EH9 3FD,\\ Scotland\\ Email: \texttt{J.D.Green@sms.ed.ac.uk}
\end{document}